\newtheorem{Thm}{Theorem} 
\newtheorem*{Thm*}{Theorem}
\newtheorem{Lem}[Thm]{Lemma}
\numberwithin{equation}{section}
\renewcommand{\phi}{\varphi}
\newcommand{\C}{\mathrm{C}}
\newcommand{\Z}{\mathrm{Z}}
\newcommand{\ZZ}{\mathbb{Z}}
\newcommand{\QQ}{\mathbb{Q}}
\newcommand{\NN}{\mathbb{N}}
\newcommand{\FF}{\mathbb{F}}
\newcommand{\Aut}{\operatorname{Aut}}
\newcommand{\Inn}{\operatorname{Inn}}
\newcommand{\Out}{\operatorname{Out}}
\newcommand{\GL}{\operatorname{GL}}
\newcommand{\id}{\operatorname{id}}
\title{Characterizing inner automorphisms\\ and realizing outer automorphisms} 
\author{Benjamin Sambale\footnote{Institut für Algebra, Zahlentheorie und Diskrete Mathematik, Leibniz Universität Hannover, Welfengarten 1, 30167 Hannover, Germany,
\href{mailto:sambale@math.uni-hannover.de}{sambale@math.uni-hannover.de}}}
\date{\today}
\begin{document}
\frenchspacing
\maketitle
\renewcommand{\sectionautorefname}{Section}

\begin{abstract}\noindent
We give elementary proofs of the following two theorems on automorphisms of a finite group $G$: (1) An automorphism of $G$ is inner if and only if it extends to an automorphism of every finite group containing $G$.
(2) There exists a finite group, whose outer automorphism group is isomorphic to $G$. 
The first theorem was proved by Pettet using a graph-theoretical construction of Heineken--Liebeck. A Lie-theoretical proof of the second theorem was sketched by Cornulier in a MathOverflow post. Our proofs are purely group-theoretical.
\end{abstract}

\textbf{Keywords:} inner automorphism, outer automorphism\\
\textbf{AMS classification:} 20D45, 20F05, 20F28 

\section{Introduction}

An automorphism $\alpha$ of a group $G$ is called \emph{inner} if there exists some $g\in G$ such that $\alpha(x)=gxg^{-1}$ for all $x\in G$. If $G$ is a subgroup of a group $H$, it is clear that $g$ still induces an (inner) automorphism of $H$. In 1987, Schupp~\cite{Schupp} has shown conversely that inner automorphisms are characterized by this property, i.\,e. if $\alpha\in\Aut(G)$ extends to every group containing $G$, then $\alpha$ is inner. According to \cite{DugasGobel}, this has answered a question of Macintyre. The question was asked again much later by Bergman~\cite[p. 93]{Bergman}, who obtained a partial answer in the language of category theory.
Using free products and small cancellation theory, Schupp constructs for a non-inner $\alpha\in\Aut(G)$ an infinite group $H$ such that $\alpha$ does not extend to $H$ (if $G$ is countable, the construction is already contained in Miller--Schupp~\cite{MillerSchupp}). 
One may asks whether inner automorphisms of \emph{finite} groups $G$ are characterized by the property that they extend to all \emph{finite} groups containing $G$.

The first step in this direction was a paper from 1974 of Heineken--Liebeck~\cite{HeinekenLiebeck}, who constructed a finite $p$-group $P$ such that the image of the canonical map $\Aut(P)\to\Aut(P/\Z(P))$ is isomorphic to $G$. Their construction relies on a variation of Frucht's theorem on the automorphism group of graphs, and requires a treatment of special cases. 
Using more advanced graph theoretical theorems, Lawton~\cite{Lawton} came up with a shorter proof. Subsequently, Webb~\cite{WebbAut} has refined the construction (again at the cost of more graph theory) to obtain a special $p$-group $P$ (that means $P'=\Z(P)=\Phi(P)$ is elementary abelian) with the desired property (see also Hughes~\cite{Hughes}). 
Only after Schupp's paper, Pettet~\cite{Pettet} noticed in 1989 that this result implies Schupp's theorem for finite groups (and more restricted families of groups). In the same paper, he also obtained the dual statement for factor groups (see the theorem below).
Eventually, Pettet~\cite{Pettet2} gave a new proof of Schupp's original theorem with the same graph theoretical approach. An alternative construction of $P$ was established by Bryant--Kov\'{a}cs~\cite{Bryant} in 1978 making use of Lie theory (see also Huppert--Blackburn~\cite[Theorem~VIII.13.5]{Huppert2} and Hartley--Robinson~\cite{HartleyRobinson}). 

The first objective of this paper is to provide a new elementary proof of the following theorem, which avoids small cancellation theory, graph theory and Lie theory. 

\begin{Thm*}[\textsc{Pettet}]
For every automorphism $\alpha$ of a finite group $G$ the following statements are equivalent:
\begin{enumerate}[(1)]
\item $\alpha$ is an inner automorphism.
\item $\alpha$ extends to every finite group containing $G$.
\item $\alpha$ lifts to every finite group $\hat{G}$ such that $\hat{G}/N\cong G$ for some characteristic subgroup $N\le \hat{G}$.
\end{enumerate}
\end{Thm*}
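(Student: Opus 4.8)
The plan is to establish $(1)\Rightarrow(2)$ and $(1)\Rightarrow(3)$ directly and then to deduce both converses from a single construction. The forward implications are immediate: if $\alpha$ is conjugation by $g\in G$, then conjugation by $g$ restricts to $\alpha$ on $G$ inside any overgroup, giving $(2)$; and if $N\trianglelefteq\hat G$ is characteristic with $\hat G/N\cong G$, then, lifting $g$ to some $\hat g\in\hat G$, conjugation by $\hat g$ is an automorphism of $\hat G$ (it stabilises the characteristic subgroup $N$) inducing $\alpha$ on $\hat G/N\cong G$, giving $(3)$. Thus everything reduces to the following: \emph{for each non-inner $\alpha\in\Aut(G)$ exhibit a finite group $\hat G$ with a characteristic subgroup $N$ such that $\hat G=N\rtimes G$ and $\alpha$ does not lie in the image of the canonical map $\Aut(\hat G)\to\Aut(\hat G/N)\cong\Aut(G)$.} Such a $\hat G$ refutes $(3)$ outright, and it also refutes $(2)$: here $G$ is the complement to $N$ in $\hat G$, and any $\beta\in\Aut(\hat G)$ with $\beta|_G=\alpha$ stabilises the characteristic subgroup $N$ and hence induces $\alpha$ on $\hat G/N\cong G$, which is impossible. (Note that the image of $\Aut(\hat G)\to\Aut(G)$ always contains $\Inn(G)$, so only the outer part needs to be controlled; in particular Schupp's theorem is not needed.)

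For the construction I would first try $\hat G=V\rtimes G$, where $V$ is a faithful $\FF_\ell[G]$-module over the prime field $\FF_\ell$ for a prime $\ell$ not dividing $|G|$. Then $V=\F(\hat G)$ is characteristic and $\hat G/V\cong G$. A short computation shows that every $\beta\in\Aut(\hat G)$ maps $V$ onto itself by an additive bijection that is semilinear with respect to the induced automorphism $\bar\beta$ of $\hat G/V\cong G$; hence $V\cong V^{\bar\beta}$ as $\FF_\ell[G]$-modules, so the image of $\Aut(\hat G)\to\Aut(G)$ is contained in $\{\gamma\in\Aut(G):V^{\gamma}\cong V\}$. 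Choosing $V$ to be the direct sum of the irreducible $\FF_\ell[G]$-modules with \emph{pairwise distinct} multiplicities, this set shrinks to the group of automorphisms of $G$ that fix every irreducible $\FF_\ell[G]$-module.

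The main obstacle is that this last group need not reduce to $\Inn(G)$: an automorphism fixing every irreducible $\FF_\ell[G]$-module is essentially a \emph{class-preserving} automorphism, and there exist finite groups with non-inner class-preserving automorphisms, so $V\rtimes G$ is by itself not rigid enough. The crux --- the point at which Heineken--Liebeck invoked a graph-theoretic device --- is therefore to detect class-preserving outer automorphisms by elementary means: I would enrich $\hat G$ with a further characteristic layer (again assembled from suitable modules, or from a regular copy of $G$) that records individual group elements rather than merely their conjugacy classes, and then show by a direct analysis of normal subgroups, direct factors and centralisers that the enlarged group admits only the ``obvious'' automorphisms. Once this rigidity is in place --- together with verifying that $N$ is characteristic and settling a few small groups $G$ separately --- the reduction above closes $(2)\Rightarrow(1)$ and $(3)\Rightarrow(1)$ at the same time, and all three statements are equivalent.
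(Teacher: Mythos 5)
Your reduction is correct and is exactly the one the paper uses: both forward implications are trivial, and for the converses it suffices to produce a finite group $\hat G=N\rtimes G$ with $N$ characteristic such that every automorphism of $\hat G$ induces an inner automorphism on $\hat G/N\cong G$ (the paper builds one such $\hat G$ serving all non-inner $\alpha$ at once; your ``one per $\alpha$'' variant is equally fine). Your observation that such a $\hat G$ kills both (2) and (3) simultaneously, and that only the outer part of the image of $\Aut(\hat G)\to\Aut(G)$ must be controlled, is also sound.

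However, the proof has a genuine gap precisely where the real work lies: the construction of a rigid $N$. Your concrete attempt --- $V\rtimes G$ with $V$ a faithful $\FF_\ell[G]$-module assembled from irreducibles with pairwise distinct multiplicities --- is, as you yourself note, insufficient, because the image of $\Aut(V\rtimes G)\to\Aut(G)$ only shrinks to (roughly) the class-preserving automorphisms, and these can be non-inner. At that point your proposal becomes a programme rather than a proof: ``enrich $\hat G$ with a further characteristic layer \ldots\ that records individual group elements rather than merely their conjugacy classes, and then show by a direct analysis \ldots\ that the enlarged group admits only the obvious automorphisms.'' No such layer is constructed and no rigidity is proved, and this is exactly the nontrivial content of the theorem. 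The paper's Theorem~\ref{thmext} supplies the missing construction: $N=Q\rtimes P$, where $P$ is a class-$2$ $p$-group ($p>|G|$) on generators $v_g$ indexed by the elements $g\in G$, with the relations $v_g^p=\prod_i[v_g,v_{gx_i}]^i$ encoding the Cayley color graph of $G$ with respect to a generating set $x_1,\ldots,x_n$ (\autoref{lemp3} and \autoref{lemgenrel} guarantee these relations define a group in which the $[v_g,v_h]$ are linearly independent, so exponents can be compared), and $Q$ an elementary abelian $q$-group ($q\equiv 1\pmod p$) whose role is to force any automorphism to permute the ``coordinate'' subgroups $\langle v_g,w_g\rangle$, i.e.\ to act as a permutation of the index set $G$; the commutator relations then force this permutation to be a left translation composed with the identity on the generators, which is precisely the color-preserving-automorphism argument for the Cayley graph. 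Your phrase ``a regular copy of $G$'' gestures at this idea, but without an explicit construction and a verification (order of $P'$, distinguishability of the generators, the centralizer computation in $N/P'$, the comparison of exponents modulo $p$) the implications (2)$\Rightarrow$(1) and (3)$\Rightarrow$(1) are not established.
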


The strategy is to replace the $p$-group $P$ in \cite{HeinekenLiebeck} by a semidirect product $N=Q\rtimes P$, where $Q$ is an elementary abelian $q$-group and $P$ is a $p$-group of nilpotency class $2$. In contrast to the papers cited above, we do not make an effort to minimize $N$. 

Our second objective concerns the inverse problem on automorphism groups. By a theorem of Leder\-mann--Neumann~\cite{LedermannNeumann}, there exist only finitely many finite groups with a given automorphism group (for an elementary proof see \cite{SambaleAut}).
However, not every group actually occurs as an automorphism group. For instance, it is a popular (and easy) exercise that a non-trivial cyclic group of odd order cannot be an automorphism group. The situation changes if we instead consider the \emph{outer} automorphism group $\Out(H):=\Aut(H)/\Inn(H)$ of groups $H$. Indeed, Matumoto~\cite{Matumoto} proved that for every group $G$ there exists a group $H$ such that $\Out(H)\cong G$ (for finite $G$ this was established earlier by Kojima~\cite{Kojima}). Similar to Schupp's paper, the construction of $H$ is based on an HNN-extension and yields infinite groups. 
In later work, it was shown that $H$ can be chosen to be locally finite, finitely generated (if $G$ is countable), metabelian or simple (see \cite{BraunGobel,BumaginWise,GobelParas,DGG}). Problem~16.59 in the Kourovka Notebook~\cite{Kourovka} has asked if $H$ can be chosen finite when $G$ is finite. This was answered in 2020 by Cornulier on MathOverflow~\cite{Cornulier}.

\begin{Thm*}[\textsc{Cornulier}]
Every finite group is the outer automorphism group of some finite group.
\end{Thm*}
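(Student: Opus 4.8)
The plan is to imitate the construction used above for Pettet's theorem: we build an explicit finite group $H=Q\rtimes P$ with $\Out(H)\cong G$, where $p\ne q$ are primes, $Q$ is an elementary abelian $q$-group, $P$ is a $p$-group of nilpotency class at most $2$, and $Q$, $P$ and the relevant actions are all manufactured from faithful linear representations of $G$. The coprimality $p\ne q$ is used twice. First, $Q=\pcore_q(H)$ is a characteristic subgroup of $H$, so every $\alpha\in\Aut(H)$ restricts to an element $\theta\in\GL(Q)$ and induces an automorphism of $H/Q\cong P$. Second, $\cohom^1(P,Q)=0$, so all complements of $Q$ in $H$ are conjugate under $Q$; hence, after composing $\alpha$ with a suitable inner automorphism — which does not change its class in $\Out(H)$ — we may assume $\alpha(P)=P$, and then $\alpha$ also restricts to some $\psi\in\Aut(P)$.

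Since $H=QP$ with $Q\cap P=1$, such an $\alpha$ is determined by the pair $(\theta,\psi)$, and this pair is constrained exactly by the requirement that $\theta$ intertwine the $P$-action on $Q$ with its $\psi$-twist; conversely every such ``admissible'' pair extends to an automorphism of $H$. Sorting out which admissible pairs arise from $\Inn(H)$ — namely those induced by conjugation by an element of $N_H(P)$ — this identifies $\Out(H)$ with the group of admissible pairs modulo the subgroup coming from $H$ itself, a normalizer-type quotient. The linear representations are used to let $G$ act on everything in sight: $P$ is chosen of class $2$ whose abelianization carries a faithful $G$-action and whose commutator form is $G$-invariant, $Q$ is a faithful $\FF_q G$-module, and the $P$-action on $Q$ is made $G$-equivariant; in this way $G$ embeds into the group of admissible pairs. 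What remains is to show that \emph{nothing else} does, so that this quotient is precisely a copy of $G$.

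This last step is the heart of the matter, and it is exactly where Heineken--Liebeck appealed to (a refinement of) Frucht's theorem on graph automorphisms, and Bryant--Kov\'acs to Lie theory; here it must be done with bare hands. One has to choose the underlying representations and the invariant commutator form rigidly enough that the only linear maps preserving all of this data up to the admissible twist are those coming from $G$ together with the scalars — and exhibiting such a rigid configuration by purely group-theoretic means is the elementary replacement for the graph- or Lie-theoretic input. After that, one must verify that the residual scalars and the ``central'' automorphisms of $P$ (those trivial on $P/\Z(P)$) all become inner once one passes from the bare $p$-group to $H=Q\rtimes P$; absorbing these unwanted automorphisms is precisely why a coprime semidirect product is used instead of a $p$-group, and making this absorption work is the main obstacle. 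Once it is in place, $\Out(H)\cong G$ follows, which is the assertion.
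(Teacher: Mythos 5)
There is a genuine gap: your text is a programme, not a proof, and it stops exactly where the theorem lives. You set up the standard decomposition of $\Aut(H)$ for a coprime semidirect product into ``admissible pairs'' modulo inner ones, embed $G$ into this group, and then say that one must (a) choose the representations and the invariant commutator form ``rigidly enough'' that nothing but $G$ and scalars survives, and (b) verify that scalars and the central automorphisms of $P$ become inner in $H$. Neither (a) nor (b) is carried out, and (a) is precisely the content that Heineken--Liebeck obtained from Frucht's theorem and the paper obtains from \autoref{lemLie}; without an explicit rigid configuration there is no theorem. Worse, for the specific shape you propose --- $H=Q\rtimes P$ with $Q$ elementary abelian of order prime to $p$ and $P$ of class $2$, i.e.\ the same shape as the group $N$ of \autoref{thmext} --- step (b) cannot work as envisioned. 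In that group the maps $v_g\mapsto v_gt_g$ with arbitrary $t_g\in P'=\Z(N)$ are automorphisms (they fix $Q$, $P/P'$ and $P'$, and respect the relations since the $t_g$ are central of order $p$), and inner automorphisms can only account for a small part of them: conjugation by elements of $Q$ perturbs elements of $P$ by elements of $Q$, not of $P'$, and conjugation by elements of $P$ yields only the ``commutation'' maps $t_g=[v_h,v_g]$, a subgroup of dimension at most $|G|$ inside a space of dimension $|G|\binom{|G|}{2}$. This is exactly why the paper remarks that $N$ from \autoref{thmext} has outer automorphisms not coming from $G$; your proposed absorption of central automorphisms by the coprime factor is the step that fails.

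The paper's actual construction is structurally different and is designed to kill these central automorphisms: it takes $H=P\rtimes Q$ where $P$ is a quotient of the free group of exponent $p$ and nilpotency class $n=|G|$ (high class, not class $2$) by relations $[x_{hg_1},\ldots,x_{hg_{n-1}},x_{hg_1}]=1$ encoding the regular action of $G$, and $Q\cong C_{p-1}^n$ acts diagonally on the generators. Rigidity is then supplied by \autoref{lemLie} (proved inside the unitriangular group), which forces any automorphism to permute the generators according to an element of $G$; and the central-automorphism problem disappears because, after Schur--Zassenhaus, an automorphism centralizing $Q$ must send $x_i$ into $\bigcap_{j\ne i}\C_P(q_j)=\langle x_i\rangle$, pinning the generators exactly rather than only modulo $P'$. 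If you want to salvage your plan, you would have to replace the class-$2$ group acting on a $q$-group by some mechanism with this kind of exact pinning; as written, the proposal neither supplies the rigidity nor can its absorption step be made to work.
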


His proof uses Lie algebras and is not easy to follow as it is written backwards. In \autoref{secOut} we present a purely group-theoretical proof based on Cornulier's ideas. As in Pettet's theorem, the idea is to construct a semidirect product $P\rtimes Q$, but this time $Q$ is abelian and $P$ is a $p$-group of exponent $p$ with a certain nilpotency class. Our group is slightly smaller compared to Cornulier's construction.

\section{Characterizing inner automorphisms}

Our first lemma is a prototype of \autoref{thmext}. 

\begin{Lem}\label{lemaut}
Let $G$ be a group acting faithfully on a cyclic group $N$. Then every automorphism of $\hat{G}:=N\rtimes G$ normalizing $N$, centralizes $\hat{G}/N$. 
\end{Lem}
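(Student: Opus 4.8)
The plan is to exploit the fact that $\Aut(N)$ is abelian whenever $N$ is cyclic. I introduce the conjugation homomorphism $\phi\colon\hat G\to\Aut(N)$, $\phi(g)(n)=gng^{-1}$. First I would record that $\Ker\phi=\C_{\hat G}(N)=N$: indeed $N$ is abelian, so $N\le\C_{\hat G}(N)$, while $\C_{\hat G}(N)\cap G=1$ because $G$ acts faithfully on $N$, and $\hat G=NG$. (In passing this shows that $G\cong\hat G/N$ is abelian, a fact we will not actually need.)

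Now let $\alpha\in\Aut(\hat G)$ with $\alpha(N)=N$ and put $\beta:=\alpha|_N\in\Aut(N)$. The key step is to apply $\alpha$ to the relation $gng^{-1}=\phi(g)(n)$ for $g\in\hat G$ and $n\in N$, which gives
\[
\phi\bigl(\alpha(g)\bigr)\bigl(\beta(n)\bigr)=\alpha(g)\,\alpha(n)\,\alpha(g)^{-1}=\alpha\bigl(\phi(g)(n)\bigr)=\beta\bigl(\phi(g)(n)\bigr).
\]
Since $\Aut(N)$ is abelian, $\beta$ commutes with $\phi(g)$, so the right-hand side equals $\phi(g)\bigl(\beta(n)\bigr)$; as $\beta$ is surjective, this forces $\phi(\alpha(g))=\phi(g)$ in $\Aut(N)$, i.e. $\alpha(g)g^{-1}\in\Ker\phi=N$. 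Hence $\alpha$ maps every coset $gN$ to itself, which is precisely the assertion that $\alpha$ centralizes $\hat G/N$.

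I do not expect a genuine obstacle here, as this lemma is a toy version of \autoref{thmext}. The only step requiring a little care is the identification $\Ker\phi=N$, where faithfulness of the $G$-action is used; the cyclicity of $N$ enters only through the abelianness of $\Aut(N)$, which is exactly what makes the displayed transport-of-structure computation go through. Everything else is routine.
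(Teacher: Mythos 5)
Your proof is correct and is essentially the paper's argument in more abstract clothing: the paper's explicit computation $gx^tg^{-1}=x^{st}=x^{ts}=\alpha(g)x^t\alpha(g)^{-1}$ is precisely your commutation of $\beta$ with $\phi(g)$ inside the abelian group $\Aut(N)$, followed by the same appeal to faithfulness to conclude $\alpha(g)g^{-1}\in N$. No issues.
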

\begin{proof}
We identify $N=\langle x\rangle$ and $G$ with the natural subgroups of $\hat{G}$. Let $\alpha\in\Aut(\hat{G})$ normalizing $N$. 
For a fixed $g\in G$ there exist $s,t\in\ZZ$ with $gxg^{-1}=x^s$ and $\alpha(x)=x^t$. Hence,
\[gx^tg^{-1}=x^{st}=\alpha(x)^s=\alpha(x^s)=\alpha(gxg^{-1})=\alpha(g)x^t\alpha(g)^{-1}.\]
Since $G$ acts faithfully on $N=\langle x^t\rangle$, we obtain $\alpha(g)\equiv g\pmod{N}$ as desired.
\end{proof}

In the following we develop some elementary facts of finitely presented groups. 
For elements $x_1,x_2,\ldots $ of a group $G$ we define commutators by $[x_1,x_2]=x_1x_2x_1^{-1}x_2^{-1}$ and $[x_1,\ldots,x_k]:=[x_1,[x_2,\ldots,x_k]]$ for $k\ge 3$. The commutator subgroup of $G$ is denoted by $G'$.  

\begin{Lem}\label{lemp3}
Let $p$ be a prime and $a,b\in\ZZ$. Then 
\[P:=\langle x,y\mid [x,x,y]=[y,x,y]=1,\ x^p=[x,y]^a,\ y^p=[x,y]^b\rangle\]
is a non-abelian group of order $p^3$.
\end{Lem}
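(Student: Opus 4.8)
The plan is to exhibit $P$ concretely as the "Heisenberg-type" group of order $p^3$ and then show the given presentation is satisfied by it and has no smaller quotient. Set $c := [x,y]$. The first two relations $[x,x,y] = [y,x,y] = 1$ say that $c$ is central in $P$ (since $P = \langle x,y\rangle$ and $c$ commutes with both generators), so $P' = \langle c\rangle$ is central and $P$ has nilpotency class at most $2$. With $c$ central, the commutator $[x,y]$ is bilinear in the usual sense, so the remaining relations $x^p = c^a$, $y^p = c^b$ let one push every element of $P$ to the normal form $x^i y^j c^k$. Working modulo the central subgroup $\langle c\rangle$ one sees $x,y$ commute, so $P/\langle c\rangle$ is generated by two commuting elements whose $p$-th powers lie in $\langle c\rangle$, hence $P/\langle c\rangle$ is abelian of order dividing $p^2$; combined with $\langle c\rangle$ having order dividing $p$ (because $c^p = [x,y]^p$ is a product of $p$-th powers of $x$ and $y$ up to commutators, and in class $2$ one computes $c^p = [x^p,y] = [c^a,y] = 1$), we get $|P| \le p^3$.

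For the reverse inequality I would write down an explicit non-abelian group of order $p^3$ in which the two relators hold. The cleanest choice is the unitriangular group $U := \left\{ \begin{sm} 1 & * & * \\ 0 & 1 & * \\ 0 & 0 & 1 \end{sm} : \text{entries in } \FF_p \right\}$ when $p$ is odd, picking $x,y$ to be suitable elementary matrices so that $[x,y]$ is the matrix with a single $1$ in the top-right corner; then $x^p = y^p = 1$, which matches $x^p = c^a$, $y^p = c^b$ only when $a \equiv b \equiv 0$. To handle general $a,b$ (and the prime $2$) I would instead take the group with presentation $\langle x,y \mid c := [x,y] \text{ central}, c^p = 1, x^p = c^a, y^p = c^b\rangle$ realized inside a concrete central extension: take the set $\FF_p \times \FF_p \times \FF_p$ with multiplication $(i,j,k)(i',j',k') = (i+i', j+j', k+k'+ij' + a\binom{i+i'}{p}/\!\ldots)$ — more practically, realize $P$ as a subgroup of $\Aut$ of a suitable group, or simply quote that a group of order $p^3$ with these power relations exists because the relevant $2$-cocycle on $C_p \times C_p$ with values in $C_p$ is consistent. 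The existence of such a group of order exactly $p^3$ shows $|P| \ge p^3$, hence $|P| = p^3$; and since $[x,y] \ne 1$ there (the extension is non-split / the cocycle is nontrivial), $P$ is non-abelian.

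The main obstacle I expect is the reverse inequality: one must be sure the relations are \emph{consistent}, i.e. that collapsing does not occur and $c$ really has order $p$ rather than $1$. The slick way around computation is: build the free class-$2$ exponent-considerations group on $x,y$, i.e. start from $F := \langle x,y \mid [x,x,y]=[y,x,y]=1\rangle$, which is the free class-$2$ nilpotent group of rank $2$ and is isomorphic to $\ZZ^2 \times \ZZ$ (with $\ZZ = \langle[x,y]\rangle$ central); then $P$ is the quotient of $F$ by the central subgroup generated by $[x,y]^p$, $x^p[x,y]^{-a}$, $y^p[x,y]^{-b}$ — and a direct check in $\ZZ^2\times\ZZ$ shows this subgroup has index exactly $p^3$, because $[x,y]^p$ already lies in the subgroup generated by the other two relators modulo $F'$-torsion (indeed $p\cdot(\text{third coordinate})$ is forced once the first two relators kill the $p$-th powers). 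So the whole proof reduces to a rank computation of a subgroup of $\ZZ^3$, which is routine linear algebra over $\ZZ$, and non-abelianness is immediate since $[x,y]$ has order $p \ne 1$ in the quotient.
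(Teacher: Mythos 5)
Your upper bound $|P|\le p^3$ is essentially the paper's argument (class $\le 2$, then $[x,y]^p=[x^p,y]=[[x,y]^a,y]=1$), and that part is fine. The genuine gap is in the lower bound $|P|\ge p^3$, which is exactly the step where collapsing must be excluded. Of the three routes you offer, the unitriangular one covers only $a\equiv b\equiv 0\pmod p$ (and odd $p$), and the cocycle route merely asserts that ``the relevant $2$-cocycle \dots is consistent,'' which is precisely the fact to be proved. The third route --- presenting $P$ as a quotient of $F=\langle x,y\mid [x,x,y]=[y,x,y]=1\rangle$ --- is a sound idea, but as written it rests on incorrect statements. $F$ is the free nilpotent group of rank $2$ and class $2$, i.e.\ the integral Heisenberg group; it is non-abelian, hence not isomorphic to $\ZZ^2\times\ZZ$. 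The subgroup generated by $x^p[x,y]^{-a}$, $y^p[x,y]^{-b}$, $[x,y]^p$ is not central in $F$ (indeed $[x^p[x,y]^{-a},y]=[x,y]^p\ne 1$), and $[x,y]^p$ lies in the \emph{normal closure} of the two power relators but not in the subgroup they generate (that subgroup meets the centre $\langle [x,y]\rangle$ only in $\langle [x,y]^{p^2}\rangle$); also ``modulo $F'$-torsion'' is vacuous since $F'=\langle[x,y]\rangle\cong\ZZ$ is torsion-free. Consequently the decisive index computation cannot be ``routine linear algebra over $\ZZ$'' (a $3\times 3$ determinant): the quotient is by a normal, non-central subgroup of a non-abelian group, and the coset count has to be carried out there.

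The route is repairable, and the repaired version would be a genuinely different (and case-free) proof from the paper's: with $c:=[x,y]$ and the normal form $x^iy^jc^k$ in $F$, one checks that $K:=\langle x^pc^{-a},\,y^pc^{-b},\,c^p\rangle=\{x^{pm}y^{pn}c^k: k\equiv -am-bn \pmod p\}$ is normal and equals the normal closure of the two power relators (the key identity being $[x^pc^{-a},y]=c^p$), and then computes the index in two stages through the central extension of $\ZZ^2$ by $\langle c\rangle$: the image of $K$ in $\ZZ^2$ is $p\ZZ\times p\ZZ$ and $K\cap\langle c\rangle=\langle c^p\rangle$, so $[F:K]=p^3$ and $c\notin K$, giving a non-abelian group of order $p^3$ uniformly in $p,a,b$. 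The paper instead proves the lower bound by exhibiting concrete groups of order $p^3$ satisfying the relations: $D_8$ or $Q_8$ for $p=2$, and for odd $p$ either the extraspecial group of exponent $p$ or $C_{p^2}\rtimes C_p$ after the substitution $y\mapsto x^{a'}y$. As submitted, however, your crucial step is not actually proved.
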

\begin{proof}
The commutator relations show that $P$ has nilpotency class $\le 2$, i.\,e. $P'\le\Z(P)$. Hence, $[x,y]^p=[x^p,y]=1$ and $|P'|\le p$ (see \cite[Hilfssatz~III.1.3]{Huppert}). It follows that $|P|\le p^3$. To complete the proof, we construct a group of order $p^3$ realizing the given relations. Suppose first that $p=2$. If $ab$ is even, then $P\cong D_8$ and otherwise $P\cong Q_8$. Thus, let $p>2$. If $a\equiv b\equiv 0\pmod{p}$, then the extraspecial group of exponent $p$ fulfills the relations as is well-known. Now let $p\nmid a$ and $a'\in\ZZ$ such that $aa'\equiv -b\pmod{p}$. For $y':=x^{a'}y$ we compute
\[(y')^p=x^{pa'}y^p[y,x^{a'}]^{\binom{p}{2}}=x^{pa'}y^p=[x,y]^{aa'+b}=1\] 
and $[x,y']=[x,y]$ by \cite[Hilfssatz~III.1.3]{Huppert}.
Hence, replacing $y$ by $y'$ leads to $b=0$. This remains true when we replace $y$ by $y^{-a}$. Then $x^p=[x,y]^{-1}=[y,x]$ and 
\[P\cong\langle x,y\mid x^{p^2}=y^p=1,\ yxy^{-1}=x^{1+p}\rangle\cong C_{p^2}\rtimes C_p.\qedhere\]
\end{proof}

\begin{Lem}\label{lemgenrel}
Let $F$ be the free group in the free generators $x_1,\ldots,x_n$. Let $c_1,\ldots,c_n\in F'$. For a prime $p$, let $P$ be the group generated by $x_1,\ldots,x_n$ subject to the relations
$x_i^p=c_i$ and $[x_i,x_j,x_k]=1$ for all $1\le i,j,k\le n$. Then $P/P'$ is an elementary abelian $p$-group with basis $\{x_iP':i=1,\ldots,n\}$ and $P'$ is an elementary abelian $p$-group with basis $\{[x_i,x_j]:1\le i<j\le n\}$. In particular, $|P|=p^{\binom{n+1}{2}}$.
\end{Lem}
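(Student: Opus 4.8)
The strategy is to bound $|P|$ from above by $p^{\binom{n+1}{2}}$ using the given relations, and then exhibit an explicit group of that order satisfying the relations, so that all inequalities collapse to equalities and the claimed bases are forced.

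For the upper bound I would first observe that the relations $[x_i,x_j,x_k]=1$ force $P' \le \Z(P)$, so $P$ has nilpotency class at most $2$. By standard commutator identities in a class-$2$ group, $P'$ is generated by the commutators $[x_i,x_j]$ with $i<j$, these commutators are central, and the commutator map is bilinear modulo higher terms; moreover $[x_i,x_j]^p = [x_i^p,x_j] = [c_i,x_j] = 1$ since $c_i \in F' $ maps into $P' \le \Z(P)$. Hence $P'$ is an elementary abelian $p$-group generated by the $\binom{n}{2}$ elements $[x_i,x_j]$, so $|P'| \le p^{\binom{n}{2}}$. Next, in $P/P'$ the images of the $x_i$ commute and satisfy $\bar x_i^p = \overline{c_i} = 1$, so $P/P'$ is elementary abelian of rank at most $n$, giving $|P/P'| \le p^n$. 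Therefore $|P| \le p^{n + \binom{n}{2}} = p^{\binom{n+1}{2}}$.

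For the lower bound I would construct a group $\hat P$ of order exactly $p^{\binom{n+1}{2}}$ in which the relations hold. The natural candidate is the relatively free class-$2$ exponent-$p$ group on $n$ generators when $p$ is odd (equivalently, a group with underlying set $\FF_p^n \times \FF_p^{\binom{n}{2}}$ and multiplication given by the obvious cocycle built from the alternating bilinear form), and then twist it: one wants generators $\hat x_1,\dots,\hat x_n$ with $[\hat x_i,\hat x_j]$ free of relations among the central generators and with $\hat x_i^p$ matching the prescribed central element $c_i$ (read off via the basis $[\hat x_i,\hat x_j]$ of the centre). Concretely, I would take the free class-$2$ nilpotent group $F/\gamma_3(F)$, quotient by the central subgroup generated by $[x_i,x_j]^p$ for all $i<j$ and by $x_i^p c_i^{-1}$ for all $i$; a direct check that these central generators are independent (i.e.\ that the quotient still has the full centre of rank $\binom{n}{2}$ and full $P/P'$ of rank $n$) shows $|\hat P| = p^{\binom{n+1}{2}}$. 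For $p=2$ one must be slightly more careful because squaring is not additive on a class-$2$ group, but the same bookkeeping works with the correction term $[x_i,x_j]^{\binom{p}{2}}$ as in \autoref{lemp3}, or one can simply invoke that such relatively free groups exist with the stated order.

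Comparing the two bounds forces $|P| = p^{\binom{n+1}{2}}$, $|P'| = p^{\binom{n}{2}}$ and $|P/P'| = p^n$, whence the generating sets $\{[x_i,x_j] : i<j\}$ and $\{x_iP' : i \le n\}$ must in fact be bases. The step I expect to be the main obstacle is the explicit construction of $\hat P$ with the correct order, specifically verifying that imposing $x_i^p = c_i$ does not accidentally collapse part of the centre — this requires checking that $x_i^p c_i^{-1}$, together with the $[x_i,x_j]^p$, generate a central subgroup of index exactly $p^{\binom{n+1}{2}}$, which is where the bilinearity of the commutator and (for $p=2$) the quadratic correction term must be handled carefully.
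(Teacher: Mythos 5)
Your proposal is correct in outline and its first half (nilpotency class $\le 2$, bilinearity of commutators, $[x_i,x_j]^p=[x_i^p,x_j]=[c_i,x_j]=1$, hence $|P'|\le p^{\binom{n}{2}}$ and $|P/P'|\le p^n$) coincides with the paper's argument; but your lower bound takes a genuinely different route. The paper never builds a model of the full order $p^{\binom{n+1}{2}}$: to see that the $[x_i,x_j]$ are independent it fixes a pair $i<j$, kills all other generators, and maps $P$ onto the two-generator group of \autoref{lemp3}, whose non-abelianness (order $p^3$) detects exactly the one commutator $[x_i,x_j]$; independence of the $x_iP'$ then follows by bracketing a putative relation with $x_1$ (and with a second generator for $a_1$). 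You instead propose one explicit group $\hat P$ of order $p^{\binom{n+1}{2}}$ satisfying the relations, which does yield the lemma by comparing orders, but the ``direct check'' you defer is the entire content of the statement. Two points if you pursue it: in your quotient description the elements $x_i^pc_i^{-1}$ are \emph{not} central in $F/\gamma_3(F)$, so you must take the normal closure (harmless, since it only adds the $[x_i,x_j]^p$ you already include), and the verification that the centre does not collapse can be completed by noting that $F/\gamma_2(F)\cong\ZZ^n$ is torsion-free (so no nontrivial product of the $x_i^pc_i^{-1}$ lands in the derived subgroup) together with the standard--but not free--fact that $\gamma_2(F)/\gamma_3(F)$ is free abelian on the $[x_i,x_j]$ with $i<j$; alternatively your cocycle model on $\FF_p^n\times\FF_p^{\binom{n}{2}}$ with the $[x,y]^{\binom{p}{2}}$ correction works uniformly in $p$, but it amounts to redoing in rank $n$ the bookkeeping that the paper deliberately confines to the rank-two case in \autoref{lemp3}. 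In short: your approach buys a self-contained construction and the order of $P$ in one stroke at the cost of either citing the structure of the free class-$2$ nilpotent group or carrying out a cocycle verification, whereas the paper's approach needs nothing beyond the already-proved order-$p^3$ lemma and elementary class-$2$ identities.
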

\begin{proof}
It follows from $x_i^p=w_i\in P'$ that $P/P'$ is an elementary abelian $p$-group generated by $\{x_iP':i=1,\ldots,n\}$.
Since $[x_i,x_j,x_k]=1$ for all $i,j,k$, $P$ has nilpotency class $\le 2$. Hence, $[xy,z]=[x,z][y,z]$ and $[x,yz]=[x,y][x,z]$ for all $x,y,z\in P$ (see \cite[Hilfssatz~III.1.2]{Huppert}). In particular, $[x_i,x_j]^p=[x_i^p,x_j]=[c_i,x_j]=1$. This shows that $P'$ is an elementary abelian $p$-group generated by $\{[x_i,x_j]:1\le i<j\le n\}$. 

Suppose that $x:=\prod_{i<j}[x_i,x_j]^{a_{ij}}=1$ for some integers $0\le a_{ij}\le p-1$. We fix $i<j$ and consider the free group $F_2$ generated by $y_i$ and $y_j$. Let $\phi:F\to F_2$ be the homomorphism defined by
\[\phi(x_k)= \begin{cases}
y_k&\text{if }k\in \{i,j\},\\
1&\text{otherwise}
\end{cases}\]
for $k=1,\ldots,n$. Set 
\[P_2:=\langle y_i,y_j\mid [y_i,y_i,y_j]=[y_j,y_i,y_j]=1,\ y_i^p=\phi(c_i),\ y_j^p=\phi(c_j)\rangle.\]
As elements of $P_2$, $\phi(c_i)$ and $\phi(c_j)$ are (possibly trivial) powers of $[y_i,y_j]$. 
Thus by \autoref{lemp3}, $P_2$ is a non-abelian group of order $p^3$. Since every relation of $P$ in the $x_k$ is satisfied by a relation of $P_2$ in the $y_k$, $\phi$ factors through a homomorphism $\overline{\phi}:P\to P_2$. It follows that $[y_i,y_j]^{a_{ij}}=\overline{\phi}(x)=1$ and $a_{ij}=0$. This shows that the commutators $[x_i,x_j]$ with $i<j$ are linearly independent, so they form a basis of $P'$. 
Now let $x:=\prod_{i=1}^nx_i^{a_i}\in P'$ for some $0\le a_i\le p-1$. Then 
\[1=[x_1,x]=\prod_{i=2}^n[x_1,x_i]^{a_i}\]
and $a_i=0$ for $i=2,\ldots,n$. Similarly, we obtain $a_1=0$. Therefore, $P/P'$ has rank $n$ as claimed. 
Finally, $|P|=|P'||P/P'|=p^{\binom{n}{2}+n}=p^{\binom{n+1}{2}}$.
\end{proof}

\begin{Thm}\label{thmext}
For every finite group $G$ there exist primes $q>p>|G|$ and a finite $\{p,q\}$-group $N$ such that every automorphism of $\hat{G}:=N\rtimes G$ induces an inner automorphism of $\hat{G}/N\cong G$. 
\end{Thm}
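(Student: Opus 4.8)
The plan is to build $N$ as a semidirect product $N = Q \rtimes P$ where $P$ is a $p$-group of nilpotency class $2$ whose generators encode the Cayley graph of $G$ (as in Heineken--Liebeck), and $Q$ is an elementary abelian $q$-group on which $P$ acts in a way that "rigidifies" $P$ inside $N$. Concretely, pick primes $q > p > |G|$. Take $P$ to be the group of \autoref{lemgenrel} on generators $x_1, \dots, x_n$ indexed by the elements of $G$ (or by $G$ together with a few auxiliary markers), with the power relations $x_i^p = c_i \in P'$ chosen so that $\Aut(P)$ acting on $P/P' \cong \FF_p^n$ realizes exactly the permutation action of $G$ on itself by left translation, up to the usual graph-automorphism bookkeeping; this is where the Heineken--Liebeck graph idea is replaced by the commutator combinatorics already set up in Lemmas~\ref{lemp3}--\ref{lemgenrel}. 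The key point \autoref{lemgenrel} gives us is that $P' \cong \FF_p^{\binom{n}{2}}$ with the explicit basis $[x_i, x_j]$, so $\Aut(P)$ is faithfully visible on $P/P' \oplus P'$ and can be pinned down by linear algebra over $\FF_p$.

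Next I would let $G$ act on $\hat{G} = N \rtimes G$ in the natural way and analyze an arbitrary $\alpha \in \Aut(\hat G)$. The first task is to show $\alpha$ must normalize $N$: since $|G|$ is coprime to $|N|$ and $N$ is the (unique) Hall $\{p,q\}$-subgroup, $\alpha(N) = N$ automatically — so $N$ is characteristic in $\hat G$ and this step is free. Then $\alpha$ normalizes the characteristic subgroups of $N$, in particular it normalizes $Q$ (the Sylow $q$-subgroup of $N$, hence characteristic in $N$ and in $\hat G$) and it normalizes $P' $, $\Z(P)$ etc. The heart of the argument is then: modulo these constraints, $\alpha$ acts on $P/\Phi(P) \cong \FF_p^n$ through a matrix that must commute with the left-translation action of $G$ \emph{and} respect the pairing into $P'$; by the Heineken--Liebeck-style choice of the $c_i$ this forces $\alpha|_{P/\Phi(P)}$ to lie in the image of $G \ltimes (\text{scalars})$, and then the $Q$-action is used to kill the scalars and to force $\alpha$ to agree with conjugation by an element of $\hat G$ on $P$ up to an inner automorphism. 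Finally a prototype-of-\autoref{lemaut} argument (faithfulness of the $P$-action, or of the $G$-action, on the relevant cyclic/elementary-abelian sections) shows $\alpha$ centralizes $\hat G / N$.

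Let me say more precisely how $Q$ enters, since that is the main obstacle. The role of $Q$ is to destroy the "stray" automorphisms of $P$ that do \emph{not} come from $G$: a priori $\Aut(P)$ is much bigger than $G$ (it contains $\GL$ of various sections, field-like scalars, etc.), and $P$ alone only realizes $G$ as a quotient $\Aut(P) \to \Aut(P/\Z(P))$, not on the nose. We choose $Q = \FF_q^m$ with a $P$-module structure — e.g. a sum of $1$-dimensional $\FF_q$-characters of $P/P'$ tied to the nontrivial $p$-th roots of unity in $\FF_q^\times$ (possible since $p \mid q-1$ can be arranged, or rather since we only need $q > p$ and work with a faithful module) — so that an automorphism of $\hat G$ normalizing both $Q$ and $P$ must simultaneously be compatible with this module structure; a dimension/character count then squeezes the allowed automorphisms of $P$ down to $\Inn(P) \rtimes G$. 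The hard part will be bookkeeping: choosing the $c_i$, the index set, and the $P$-module $Q$ so that (a) $G$ genuinely embeds in the relevant automorphism group, (b) nothing larger than $G \ltimes \Inn$ survives, and (c) the coprimality $q > p > |G|$ is enough to separate the $p$-part, the $q$-part, and the $G$-part of any automorphism. Once that combinatorial setup is fixed, the verification that every $\alpha \in \Aut(\hat G)$ induces an inner automorphism of $\hat G/N \cong G$ is a sequence of routine reductions: $N$ characteristic $\Rightarrow$ $Q, P', \Phi(P)$ characteristic in $\hat G$ $\Rightarrow$ $\alpha$ acts $\FF_p$- and $\FF_q$-linearly on the sections $\Rightarrow$ the module constraints pin $\alpha$ modulo $\Inn(\hat G)$ $\Rightarrow$ \autoref{lemaut}-type faithfulness finishes it.
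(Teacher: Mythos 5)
Your proposal has the same architecture as the paper's proof ($N=Q\rtimes P$ with $P$ of class $2$ built via \autoref{lemgenrel} on generators indexed by $G$, an elementary abelian $q$-group $Q$ acted on through order-$p$ characters, Schur--Zassenhaus reductions, and an \autoref{lemaut}-style finish), but there is a genuine gap: everything you defer as ``bookkeeping'' is exactly where the theorem is proved. You never specify the power relations $c_i$, and you never give the argument that pins down the automorphism induced on $\hat{G}/N$. The paper's choice is $v_g^p=\prod_{i=1}^n[v_g,v_{gx_i}]^i$ for a generating set $x_1,\ldots,x_n$ of $G$ (the exponent $i$ playing the role of the color of the Cayley edge $g\to gx_i$), and the decisive step is concrete: after arranging $\alpha(G)=G$ by Schur--Zassenhaus and using the centralizer count $|\C_{N/P'}(v_1P')|=p^{|G|}q^{|G|-1}$ in $N/P'\cong(C_q\rtimes C_p)^{|G|}$ to force $\alpha(v_1)\in\langle v_g,w_g\rangle P'$ (then normalizing $g=1$ and invoking \autoref{lemaut} to get $\alpha(v_1)\equiv v_1t_1\pmod{Q}$ with $t_1\in P'$), one applies $\alpha$ to the relation $v_1^p=\prod_i[v_1,v_{x_i}]^i$ and compares exponents in the basis $\{[v_1,v_g]\}$ of $P'$ provided by \autoref{lemgenrel}, using $\alpha(x_i)\ne1$ and $n<|G|<p$, to conclude $\alpha(x_i)=x_i$ and hence that $\alpha$ centralizes $\hat{G}/N$. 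Without an explicit choice of the $c_i$ and this comparison (or a worked-out substitute), a ``dimension/character count'' does not force the induced automorphism of $G$ to be inner.

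Moreover, your intermediate targets are both stronger than needed and unjustified: you ask that $\Aut(P)$ acting on $P/P'$ realize \emph{exactly} the translation action of $G$, and that the surviving automorphisms be squeezed to $\Inn(P)\rtimes G$. That is essentially the Heineken--Liebeck theorem, which is the hard graph-theoretic input the paper is designed to avoid; asserting it follows from ``the commutator combinatorics already set up'' begs the question. The paper never determines $\Aut(P)$ or $\Aut(N)$ (indeed it remarks that $N$ has outer automorphisms not coming from $G$); all it needs is the behavior of automorphisms of $\hat{G}$ on the quotient $\hat{G}/N$. Two smaller points: $P$ is not normal in $N$, so ``$\alpha$ normalizes $\Z(P)$'' needs the identification $P'=\Z(N)$, which comes from choosing the action $P\to\Aut(Q)$ with kernel exactly $P'$; and the one-dimensional order-$p$ action on $C_q$ genuinely requires $p\mid q-1$ (secured by an elementary case of Dirichlet), not merely $q>p$ together with ``a faithful module'' --- the specific diagonal character structure is what makes the centralizer count separate the factors $\langle v_g,w_g\rangle$ and force $\alpha$ to permute the generators.
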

\begin{proof}
Without loss of generality, we assume that $G\ne 1$.
Let $p>|G|$ be a prime. Let $x_1,\ldots,x_n\in G$ be a generating set of $G$ not containing $1$. Let $P$ be the $p$-group with generators $\{v_g:g\in G\}$ and relations
\[
[v_g,v_h,v_k]=1,\qquad v_g^p=\prod_{i=1}^n[v_g,v_{gx_i}]^i
\]
for all $g,h,k\in G$. By \autoref{lemgenrel}, $P/P'$ is elementary abelian of rank $|G|$ and $P'$ is elementary abelian with basis $[v_g,v_h]$ where $g<h$ for some fixed total order on $G$. For a fixed $g\in G$, the elements $\{v_{gh}:h\in G\}$ fulfill the same relations as the $v_h$. Thus, there exists an automorphism $\phi_g\in\Aut(P)$ with $\phi_g(v_h)=v_{gh}$ for all $h\in G$. This gives rise to a regular action $\phi:G\to\Aut(P)$, $g\mapsto\phi_g$. 

By an elementary special case of Dirichlet's theorem (see \cite[Theorem~3.1.12]{FineRosenberger}), there exists a prime $q$ such that $p\mid q-1$. Let $Q$ be the elementary abelian $q$-group with basis $\{w_g:g\in G\}$. Let $w_g\mapsto w_g^\zeta$ be an automorphism of order $p$ of $\langle w_g\rangle$. For $g\in G$, define $\gamma_g\in\Aut(Q)$ by
\[\gamma_g(w_h):=\begin{cases}
w_g^\zeta&\text{if }h=g,\\
w_h&\text{if }h\ne g.
\end{cases}\qquad(h\in G)\]
Let $\gamma:P\to\Aut(Q)$, $v_g\mapsto\gamma_g$ be the homomorphism with kernel $P'$. This gives rise to the semidirect product $N:=Q\rtimes P$ with $\Z(N)=P'$. As usual, we identify $P$ and $Q$ with the natural subgroups of $N$. Then $v_gw_hv_g^{-1}=\gamma_g(w_h)$ for all $g,h\in G$.
Again, we have a regular action $\psi:G\to\Aut(Q)$, $g\mapsto\psi_g$ with $\psi_g(w_h)=w_{gh}$. Moreover, $\phi$ and $\psi$ are compatible in the sense that
\[\phi_g(v_h)\psi_g(w_k)\phi_g(v_h)^{-1}=v_{gh}w_{gk}v_{gh}^{-1}=\gamma_{gh}(w_{gk})=\psi_g(\gamma_h(w_k))=\psi_g(v_hw_kv_h^{-1})\]
for all $g,h,k\in G$. In this way, $G$ acts faithfully on $N$. As before, we identify $G$ and $N$ with subgroups of $\hat{G}:=N\rtimes G$. Then $gv_hg^{-1}=v_{gh}$ and $gw_hg^{-1}=w_{gh}$ for $g,h\in G$. 

Now let $\alpha\in\Aut(\hat{G})$. Since $q>p>|G|$, $\alpha$ normalizes the normal Sylow $q$-subgroup $Q$, the normal Hall subgroup $N$, and in turn $\Z(N)=P'$. By the Schur--Zassenhaus theorem, there exists some $y\in N$ such that $\alpha(G)=yGy^{-1}$ (we do not require the Feit--Thompson theorem, because $N$ is solvable). Since $y$ centralizes $\hat{G}/N$, we may compose $\alpha$ with the inner automorphism induced by $y^{-1}$. Then $\alpha$ normalizes $G$.  
Next, we consider the action of $\alpha$ on 
\[N/P'\cong\bigtimes_{g\in G} \langle w_g, v_g\rangle\cong (C_q\rtimes C_p)^{|G|}.\]
It follows from
\[|\C_{N/P'}(\alpha(v_1)P')|=|\C_{N/P'}(v_1P')|=p^{|G|}q^{|G|-1}\]
that $\alpha(v_1)\in\langle v_g,w_g\rangle P'$ for some $g\in G$. Composing $\alpha$ with the inner automorphism induced by $g^{-1}$, we may assume that $g=1$. Then $\alpha$ induces an automorphism of $\langle v_1,w_1\rangle P'/P'\cong C_q\rtimes C_p$. 
By \autoref{lemaut}, there exists $t_1\in P'$ such that $\alpha(v_1)\equiv v_1t_1\pmod{Q}$. Moreover, for every $g\in G$ there exists $t_g\in P'$ with
\[\alpha(v_g)=\alpha(g v_1 g^{-1})\equiv \alpha(g)v_1t_1\alpha(g)^{-1}\equiv v_{\alpha(g)}t_g\pmod{Q}.\]
Consequently,
\[\prod_{i=1}^n[v_1,v_{x_i}]^i=v_1^p=(v_1t_1)^p\equiv \alpha(v_1)^p\equiv \prod_{i=1}^n[v_1t_1,v_{\alpha(x_i)}t_{x_i}]^i\equiv \prod_{i=1}^n[v_1,v_{\alpha(x_i)}]^i\pmod{Q}.\]
This is a relation in the linearly independent generators $[v_1,v_g]$ of the elementary abelian group $P'$.
Notice that $\alpha(x_i)\ne 1$ and $n<|G|<p$.
Comparing exponents reveals $\alpha(x_i)=x_i$ for $i=1,\ldots,n$. Since $G=\langle x_1,\ldots,x_n\rangle$, $\alpha$ induces the identity automorphism on $G$. 
\end{proof}

How does the group $P$ in the proof above relate to Heineken--Liebeck's construction?
Recall that Frucht's theorem states that every finite group $G$ is the automorphism group of some finite graph $\mathcal{G}$. Frucht's graph is based on the Cayley color graph $\mathcal{C}$, which depends on a generating set $x_1,\ldots,x_n$ of $G$. More precisely, the vertex set of $\mathcal{C}$ is $\{v_g:g\in G\}$ and there is an arrow $v_g\to v_h$ of color $i$ if and only if $h=gx_i$. Our proof of \autoref{thmext} rests on the elementary fact that the color-preserving automorphism group of $\mathcal{C}$ is isomorphic to $G$. 
The purpose of the group $Q$ is to enforce automorphisms to permute the generators of $P$.

\begin{proof}[Proof of Pettet's theorem]
The implications (1)$\Rightarrow$(2) and (1)$\Rightarrow$(3) are obvious. Conversely, if $\alpha$ fulfills (2) or (3), then $\alpha$ extends/lifts to the group $\hat{G}$ constructed in \autoref{thmext}. The theorem implies that $\alpha$ is inner. 
\end{proof}

If $G$ is solvable, the proof above shows that (2) and (3) of Pettet's theorem can be restricted to solvable extension groups $\hat{G}$.

\section{Realizing outer automorphisms}\label{secOut}

We start by proving \cite[Lemma~2]{Cornulier}. Let $n\in\NN$ and $p>n$ be a prime. 
Let $C_p\rtimes C_{p-1}$ be the holomorph of $C_p$, i.\,e. $C_{p-1}$ acts faithfully on $C_p$. Let $S_n$ be the symmetric group of degree $n$.

\begin{Lem}\label{lemOutHol}
We have $\Out((C_p\rtimes C_{p-1})^n)\cong S_n$.
\end{Lem}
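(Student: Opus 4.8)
Write $H := C_p\rtimes C_{p-1}$ for the holomorph of $C_p$, so that $H$ is a complete group: $\Z(H)=1$ and every automorphism of $H$ is inner. The plan is to compute $\Aut(H^n)$ directly and then quotient out $\Inn(H^n)$. First I would identify the characteristic structure inside $G:=H^n$. Let $P := \pcore_p(G)\cong C_p^n$ be the (normal, hence characteristic) Sylow $p$-subgroup, and let $\F(G)=P$ as well since $H$ has trivial center and the Fitting subgroup of each factor is $C_p$. Any $\alpha\in\Aut(G)$ permutes the $n$ minimal normal subgroups of $G$ contained in $P$ — these are exactly the $n$ copies $C_p^{(i)}$, characterized as the minimal normal subgroups of $G$ (each $C_{p-1}$-factor acts irreducibly and faithfully on its own $C_p$, and the other factors centralize it). This gives a homomorphism $\pi:\Aut(G)\to S_n$. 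I would then show $\pi$ is surjective: the obvious coordinate permutations of $H^n$ realize all of $S_n$, and these descend to $\Out(G)$ since a coordinate permutation is inner only if it is trivial (an inner automorphism fixes each $C_p^{(i)}$ setwise because $C_p^{(i)}$ is normal).

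Next I would analyze $\Ker\pi$, the automorphisms fixing each $C_p^{(i)}$ setwise. Such an $\alpha$ stabilizes each factor $H^{(i)}$: indeed $H^{(i)} = \C_G(\prod_{j\ne i}C_p^{(j)})$ — one checks that $\C_G(C_p^{(j)}) = C_p^{(j)}\times \prod_{k\ne j}H^{(k)}$ because the $j$-th $C_{p-1}$ acts without nonzero fixed points on $C_p^{(j)}$ — so intersecting over $j\ne i$ gives $H^{(i)}$, a characteristic-type subgroup preserved by anything in $\Ker\pi$. Hence $\Ker\pi \le \prod_i \Aut(H^{(i)}) = \Aut(H)^n$. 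The reverse inclusion is clear, so $\Ker\pi = \Aut(H)^n = \Inn(H)^n$ since $H$ is complete. Finally, $\Inn(H)^n = \Inn(H^n) = \Inn(G)$, again because $H$ is centerless. Therefore $\Ker\pi = \Inn(G)$, and combined with surjectivity of $\pi$ we get the short exact sequence $1\to\Inn(G)\to\Aut(G)\xrightarrow{\pi}S_n\to 1$, i.e. $\Out(G)\cong S_n$.

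The one genuinely non-formal point — and the main obstacle — is verifying that the $n$ copies $C_p^{(i)}$ are *intrinsically* recognizable, so that every automorphism permutes them. Concretely this means: the minimal normal subgroups of $H^n$ are precisely the $C_p^{(i)}$. A minimal normal subgroup $M$ is a direct product of isomorphic simple groups; since $G$ is solvable, $M$ is elementary abelian, so $M\le \F(G)=P\cong C_p^n$. Projecting to each factor, $M$ is a $C_{p-1}$-submodule of $\bigoplus_i C_p^{(i)}$ for the diagonal action; since each $C_p^{(i)}$ is an irreducible $C_{p-1}$-module and the modules for distinct $i$ are pairwise non-isomorphic (the $i$-th copy of $C_{p-1}$ acts nontrivially on $C_p^{(i)}$ but trivially on $C_p^{(k)}$, $k\ne i$), minimality forces $M=C_p^{(i)}$ for a unique $i$. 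I would spell this module-theoretic step out carefully; everything else is bookkeeping with centralizers and the completeness of $H$. As a sanity check, for $n=1$ this recovers $\Out(H)=1$, consistent with $H$ being complete.
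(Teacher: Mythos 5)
Your route is genuinely different from the paper's: instead of normalizing an arbitrary automorphism step by step (Schur--Zassenhaus, a centralizer-order count to locate the images of the generators, and \autoref{lemaut} to pin down each factor), you recognize the direct factors intrinsically --- the $C_p^{(i)}$ as the minimal normal subgroups --- and then use completeness of the holomorph $H=C_p\rtimes C_{p-1}$ to identify $\Ker\pi$ with $\Inn(G)$. The plan is sound, and your identification of the minimal normal subgroups is correct (the $n$ summands of $P$ are pairwise non-isomorphic irreducible modules for $Q=C_{p-1}^n$; it is the action of this whole group, not of a single ``diagonal'' $C_{p-1}$, that separates them). Note also that completeness of $H$ (for odd $p$) is itself a small argument you should include rather than quote --- it is essentially the case $n=1$ and follows from \autoref{lemaut} together with conjugacy of complements; the paper's proof establishes this information in passing.

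There is, however, one concretely faulty step. The identity $H^{(i)}=\C_G\bigl(\prod_{j\ne i}C_p^{(j)}\bigr)$ is false for $n\ge 2$: by your own computation $\C_G(C_p^{(j)})=C_p^{(j)}\times\prod_{k\ne j}H^{(k)}$, the intersection over $j\ne i$ constrains the $j$-th coordinates ($j\ne i$) to lie in $C_p^{(j)}$ and leaves the $i$-th coordinate unrestricted, so it equals $K_i:=H^{(i)}\times\prod_{j\ne i}C_p^{(j)}$, which strictly contains $H^{(i)}$. As written you have therefore not shown that an automorphism fixing every $C_p^{(j)}$ setwise stabilizes $H^{(i)}$, and that is the hinge of your computation of $\Ker\pi$. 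The gap is repairable: $K_i$ is $\Ker\pi$-invariant, and $H^{(i)}$ is characteristic in $K_i$, for instance because the $p'$-elements of $K_i\cong H\times C_p^{n-1}$ all lie in $H^{(i)}$ and generate it (equivalently, every homomorphism $H\to C_p^{n-1}$ is trivial since $H/H'\cong C_{p-1}$ has order prime to $p$, so the image of $H^{(i)}$ under any automorphism of $K_i$ projects trivially onto $\prod_{j\ne i}C_p^{(j)}$ and hence lands in $H^{(i)}$). With that correction your argument goes through and gives the lemma; compared with the paper's proof it trades the explicit centralizer count for the structural facts that $H$ is complete and that the factors are characteristic up to permutation.
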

\begin{proof}
The argument is similar as for the group $(C_q\rtimes C_p)^n$ in the proof of \autoref{thmext}, although the statement requires the factor $p-1$. 
Let $P=\langle x_1,\ldots,x_n\rangle\cong C_p^n$ and $Q:=\langle y_1,\ldots,y_n\rangle\cong C_{p-1}^n$ such that each $y_i$ induces an automorphism of $\langle x_i\rangle$ of order $p-1$ and centralizes $x_j$ for $j\ne i$. 
It is obvious that every permutation $\pi\in S_n$ induces an automorphism $\alpha_\pi$ of $G:=P\rtimes Q$ by permuting the factors $\langle x_i,y_i\rangle$. If $\pi\ne\id$, then $\alpha_\pi$ is not inner, because it acts non-trivially on the abelian quotient $G/P\cong Q$. Hence, $S_n$ induces a subgroup of $\Out(G)$. 

Conversely, let $\alpha\in\Aut(G)$ be arbitrary. Then $\alpha$ normalizes the normal Sylow $p$-subgroup $P$ of $G$.
By the Schur--Zassenhaus theorem, we can further assume that $\alpha(Q)=Q$. 
Since 
\[|\C_G(\alpha(x_i))|=|\C_G(x_i)|=p^n(p-1)^{n-1},\] 
there exists some $\pi\in S_n$ with $\alpha(x_i)\in\langle x_{\pi(i)}\rangle$ for $i=1,\ldots,n$. 
By composing $\alpha$ with $\alpha_{\pi}^{-1}$, we may assume that $\pi=\id$. A similar argument yields $\alpha(y_i)\in\langle y_i\rangle$. By \autoref{lemaut} applied to $\langle x_i\rangle\rtimes\langle y_i\rangle$, we have $\alpha(y_i)=y_i$. 
Since $\Aut(\langle x_i\rangle)\cong\langle y_i\rangle$, the action of $\alpha$ on $\langle x_i\rangle$ is induced by conjugation with some power of $y_i$. 
Composing $\alpha$ with the corresponding inner automorphism, gives $\alpha(x_i)=x_i$ (this will not affect the action of $\alpha$ on $\langle x_j,y_j\rangle$ for $j\ne i$). Doing this for $i=1,\ldots,n$, leads to $\alpha=\id$.
\end{proof}

Let $F$ be the free group in the free generators $x_1,\ldots,x_n$. Let $F^p=\langle x^p:x\in F\rangle^F\unlhd F$ be the normal closure of the set of all $p$-powers in $F$. Then $\overline{F}:=F/F^p$ is the free group of rank $n$ and exponent $p$.
We will identify $x_i$ with its image in $\overline{F}$. Define the lower central series by $\overline{F}^{[1]}:=\overline{F}$ and 
\[\overline{F}^{[k+1]}:=[\overline{F},\overline{F}^{[k]}]=\langle [x,y]:x\in \overline{F},\ y\in \overline{F}^{[k]}\rangle\]
for $k\ge 1$ as usual. We call $\overline{F}_c:=\overline{F}/\overline{F}^{[c+1]}$ the free group of rank $n$, exponent $p$ and nilpotency class $c$ (we will see in \autoref{lemLie} that the class of $\overline{F}_c$ cannot be smaller than $c$).
The adjective “free” is justified by the following universal property: If $G$ is a nilpotent group of exponent $p$ and class $\le c$, and if $y_1,\ldots,y_n\in G$, then there exists a (unique) homomorphism $\overline{F}_c\to G$, $x_i\mapsto y_i$ for $i=1,\ldots,n$. 
We will use this principle frequently in order to verify certain commutator relations in $\overline{F}_c$. 
 
It is easy to show that $\overline{F}^{[k]}/\overline{F}^{[k+1]}$ is generated by the cosets of the $k$-fold commutators $[x_{i_1},\ldots,x_{i_k}]$ where $1\le i_1,\ldots,i_k\le n$ (see \cite[Hilfssatz~III.1.11]{Huppert}). It follows that $\overline{F}^{[k]}/\overline{F}^{[k+1]}$ is a finite elementary abelian $p$-group.
In particular, $\overline{F}_c$ is a finite group. The following lemma is certainly known, but I was unable to find a proper reference.

\begin{Lem}\label{lemMultlinear}
For every finite group $G$ of exponent $p$ and $k\in\NN$, the map 
\begin{align*}
\Phi_k:(G/G')^k&\to G/G^{[k+1]},\\
(g_1G',\ldots,g_kG')&\mapsto[g_1,\ldots,g_k]G^{[k+1]}
\end{align*}
is well-defined and multilinear over $\FF_p$, i.e. for $1\le i\le k$, $h\in G$ and $h'\in G'$, 
\[[g_1,\ldots,g_{i-1},g_ihh',g_{i+1},\ldots,g_k]\equiv[g_1,\ldots,g_k][g_1,\ldots,h,\ldots,g_k]\pmod{G^{[k+1]}}.\]
\end{Lem}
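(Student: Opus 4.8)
The plan is to prove the displayed congruence directly by induction on $k$; once it is available, both the well-definedness of $\Phi_k$ (independence of the chosen representatives $g_i$) and its $\FF_p$-multilinearity fall out for free. Throughout I abbreviate $\gamma_j:=G^{[j]}$ and use without further comment the inclusions $[\gamma_r,\gamma_s]\subseteq\gamma_{r+s}$, the fact that $\gamma_j/\gamma_{j+1}$ is central, hence abelian, in $G/\gamma_{j+1}$, and the standard commutator identities $[xy,z]={}^{x}[y,z]\,[x,z]$, $[x,yz]=[x,y]\,{}^{y}[x,z]$ and ${}^{x}a=[x,a]\,a$ (cf.\ \cite[Hilfssatz~III.1.2]{Huppert}).

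The cases $k\le2$ are straightforward direct computations ($\Phi_1$ is the identity on $G/G'$, and for $k=2$ one expands $[g_1hh',g_2]$ and notes that $\gamma_2/\gamma_3$ is central, likewise in the second slot). So assume $k\ge3$ and that the lemma holds for $k-1$. I would first settle the \emph{first} slot: with $z:=[g_2,\dots,g_k]\in\gamma_{k-1}$, expand $[g_1hh',z]$ by the identities above. The terms carrying $h'$ lie in $[\gamma_2,\gamma_{k-1}]\subseteq\gamma_{k+1}$ and the conjugation terms lie in $[\gamma_1,\gamma_k]=\gamma_{k+1}$; since moreover $[g_1,z],[h,z]\in\gamma_k$ commute modulo $\gamma_{k+1}$, what survives is $[g_1hh',g_2,\dots,g_k]\equiv[g_1,z]\,[h,z]=[g_1,\dots,g_k]\,[h,g_2,\dots,g_k]\pmod{\gamma_{k+1}}$.

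The substantial case is a \emph{middle or last} slot $2\le i\le k$, and here I would invoke the case $k-1$. By the induction hypothesis the $(k-1)$-fold commutator $[g_2,\dots,g_ihh',\dots,g_k]$ equals $uvc$ with $u:=[g_2,\dots,g_k]$, $v:=[g_2,\dots,h,\dots,g_k]\in\gamma_{k-1}$ and $c\in\gamma_k$. Expanding $[g_1,uvc]$, the factor $[g_1,c]\in[\gamma_1,\gamma_k]=\gamma_{k+1}$ and the conjugation factor $[u,[g_1,v]]\in[\gamma_{k-1},\gamma_k]\subseteq\gamma_{2k-1}\subseteq\gamma_{k+1}$ are both trivial modulo $\gamma_{k+1}$, so $[g_1,\dots,g_ihh',\dots,g_k]\equiv[g_1,u]\,[g_1,v]=[g_1,\dots,g_k]\,[g_1,\dots,h,\dots,g_k]\pmod{\gamma_{k+1}}$, which is the assertion for slot $i$. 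This completes the induction.

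Finally, the specialization $h=1$ of the displayed congruence reads $[g_1,\dots,g_ih',\dots,g_k]\equiv[g_1,\dots,g_k]\pmod{\gamma_{k+1}}$, which is exactly the well-definedness of $\Phi_k$ on $(G/G')^k$; additivity in each argument then exhibits $\Phi_k$ as a $\ZZ$-multilinear — hence, since $G/G'$ has exponent $p$, an $\FF_p$-multilinear — map with values in $\gamma_k/\gamma_{k+1}$ (which is consequently elementary abelian, its generating $k$-fold commutators being values of $\Phi_k$ and hence of order dividing $p$). I expect the only real obstacle to be the middle slots: a head-on expansion of $[g_1,\dots,g_ihh',\dots,g_k]$ does not close up, since $[h',[g_{i+1},\dots,g_k]]$ lies only in $\gamma_{k-i+2}$, which fails to sit inside $\gamma_{k+1}$ unless $i=1$. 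The point of inducting on $k$ is precisely that it trades each middle slot for the $(k-1)$-dimensional case together with a single expansion of the outermost bracket, in which only the harmless commutators from $[\gamma_1,\gamma_k]$ and $[\gamma_{k-1},\gamma_k]$ occur.
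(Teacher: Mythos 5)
Your proof is correct and follows essentially the same route as the paper's: a direct expansion with $z=[g_2,\ldots,g_k]$ for the first slot, and induction on $k$ (peeling off $g_1$ and absorbing the error terms from $[G^{[1]},G^{[k]}]$ and $[G^{[k-1]},G^{[k]}]$ into $G^{[k+1]}$) for the slots $i\ge 2$. You merely spell out the conjugation bookkeeping and the passage from $\ZZ$-multilinearity to $\FF_p$-multilinearity in more detail than the paper does.
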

\begin{proof}
If we interpret $[x_1]$ as $x_1$, then $\Phi_1$ becomes the identity map. Now let $k\ge 2$. Recall that $[G^{[s]},G^{[t]}]\le G^{[s+t]}$ by \cite[Hauptsatz~III.2.11]{Huppert}. 
A direct computation shows $[xy,z]=[x,y,z][y,z][x,z]$ and $[z,xy]=[xy,z]^{-1}=[z,x][z,y][x,y,z]^{-1}$ for every $x,y,z\in G$. 
If $i=1$, we put $z:=[g_2,\ldots,g_k]\in G^{[k-1]}$ and obtain
\begin{align*}
[g_1hh',g_2,\ldots,g_k]&=[g_1h,h',z][h',z][g_1h,z]\equiv [g_1h,z]\equiv [g_1,h,z][h,z][g_1,z]\\
&\equiv[g_1,z][h,z]\equiv[g_1,\ldots,g_k][h,g_2,\ldots,g_k]\pmod{G^{[k+1]}}.
\end{align*}
It remains to consider the component $i\ge 2$. Let $z:=[g_{i+1},\ldots,g_k]$. By induction on $k$, we derive
\[[g_2,\ldots,g_ihh',,\ldots,g_k]\equiv [g_2,\ldots,g_i,z][g_2,\ldots,h,z]\pmod{G^{[k]}}\]
and
\[[g_1,\ldots,g_ihh',\ldots,g_k]\equiv [g_1,\ldots,g_i,z][g_1,\ldots,h,z]\pmod{G^{[k+1]}}.\qedhere\]
\end{proof}

The following result resembles \cite[Lemma~5]{Cornulier}.\footnote{Since the proof of \cite[Lemma~5]{Cornulier} takes place in the non-nilpotent Lie algebra $\mathfrak{gl}_n$, it is not clear to me that the obtained result can actually be used to prove the main theorem.}

\begin{Lem}\label{lemLie}
Let $n\ge 3$, $\pi\in S_{n-1}$ and $0\le a\le p-1$ such that
\begin{equation}\label{eqComm1}
[x_1,\ldots,x_{n-1},x_1]\equiv [x_{\pi(1)},\ldots,x_{\pi(n-1)},x_{\pi(1)}]^a\pmod{\overline{F}^{[n+1]}}.
\end{equation}
Then $\pi=\id$ and $a=1$. 
\end{Lem}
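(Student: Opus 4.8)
The plan is to exploit the multilinearity of the $k$-fold commutator map from \autoref{lemMultlinear} together with a dimension count on the free nilpotent group $\overline{F}_n$ of exponent $p$. The key idea is that the left-hand side of \eqref{eqComm1} and the right-hand side are both specific basis-like elements of the $\FF_p$-vector space $V := \overline{F}^{[n]}/\overline{F}^{[n+1]}$, so an equality between them should force the underlying multi-indices to match. Concretely, I would first set up a convenient spanning set: by \cite[Hilfssatz~III.1.11]{Huppert} (cited just before the lemma), $V$ is spanned by the cosets of left-normed commutators $[x_{i_1},\ldots,x_{i_n}]$. I then want to extract enough linear functionals on $V$ to distinguish $[x_1,\ldots,x_{n-1},x_1]$ from $[x_{\pi(1)},\ldots,x_{\pi(n-1)},x_{\pi(1)}]^a$ whenever $(\pi,a)\neq(\id,1)$.

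The main tool for building such functionals is the universal property of $\overline{F}_n$: for any nilpotent group $H$ of exponent $p$ and class $\le n$ and any choice of images $x_i\mapsto h_i\in H$, we get a homomorphism $\overline{F}_n\to H$ carrying $V$ into $H^{[n]}/H^{[n+1]}$. The natural target to use here is (a quotient of) the free nilpotent Lie algebra over $\FF_p$ — but to stay group-theoretic I would instead use small explicit $p$-groups, or, better, argue directly with \autoref{lemMultlinear}: since $\Phi_n$ is multilinear, specializing the generators $x_j\mapsto x_j^{c_j}$ for scalars $c_j\in\FF_p$ (which is a legitimate endomorphism of $\overline{F}_n$ by the universal property) multiplies $[x_1,\ldots,x_{n-1},x_1]$ by $c_1^2 c_2\cdots c_{n-1}$ and the right-hand side by $c_{\pi(1)}^2 c_{\pi(2)}\cdots c_{\pi(n-1)}$. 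Comparing these monomials in the $c_j$ as one ranges over all substitutions — provided the relevant commutator is genuinely nonzero in $V$, which is where the hypothesis $n\ge 3$ and the exponent-$p$ freeness enter — already pins down $\pi$: the multiset of exponents must agree, and since index $1$ appears with multiplicity $2$ on the left, we need $\pi(1)=1$ and $\{\pi(2),\ldots,\pi(n-1)\}=\{2,\ldots,n-1\}$; iterating this kind of degree argument (or invoking the Jacobi/anticommutativity identities valid mod $\overline{F}^{[n+1]}$) forces $\pi=\id$, and then $a=1$ follows because the common element is nonzero.

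The step I expect to be the main obstacle is verifying that $[x_1,\ldots,x_{n-1},x_1] \not\equiv 1 \pmod{\overline{F}^{[n+1]}}$, and more generally controlling the $\FF_p$-span of the left-normed commutators precisely enough to run the comparison — in effect, I need a lower bound on $\dim_{\FF_p} V$, equivalently a faithful model. The clean way around this is to exhibit a target where the commutator is visibly nonzero: take the free nilpotent group of class $n$ and exponent $p$ on just the letters involved, or map into an explicit matrix group over $\FF_p$ (upper unitriangular matrices) sending $x_1,\ldots,x_{n-1}$ to elementary transvections chosen so that the iterated commutator lands on a nonzero superdiagonal entry; such a model exists precisely because $n < p$ guarantees no collapse from the exponent-$p$ and $p$-power relations at commutator-length $\le n$. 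Once nonvanishing is in hand, the substitution/multilinearity bookkeeping is routine, and the anticommutativity relation $[a,b]\equiv [b,a]^{-1}$ together with the Jacobi identity (both holding modulo the next term of the lower central series) handle the reordering to conclude $\pi=\id$, $a=1$.
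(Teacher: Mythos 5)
Your overall framework (universal property, multilinearity from \autoref{lemMultlinear}, and a unitriangular matrix model to certify nonvanishing) is the right one, and your scaling trick $x_j\mapsto x_j^{c_j}$ is legitimate: modulo $\overline{F}^{[n+1]}$ it raises the two sides to the powers $c_1^2c_2\cdots c_{n-1}$ and $c_{\pi(1)}^2\prod_{j\ne\pi(1)}c_j$, and since $p>n\ge 3$ these monomials can only agree as functions on $\FF_p^{n-1}$ if $\pi(1)=1$. So, granted the nonvanishing of the left-hand side, your argument does recover $\pi(1)=1$ (and, once $\pi=\id$ is known, $a=1$). But this is where the proposal stops having content. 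The multidegree is identical for \emph{every} $\pi$ with $\pi(1)=1$, so "iterating the degree argument" yields nothing further: no substitution of the form $x_j\mapsto x_j^{c_j}$ can distinguish $[x_1,x_2,x_3,\ldots,x_1]$ from $[x_1,x_3,x_2,\ldots,x_1]$. And the appeal to "Jacobi/anticommutativity identities" to force $\pi=\id$ is backwards: those identities are precisely the mechanism by which permuted commutators of the same multidegree \emph{coincide} modulo $\overline{F}^{[n+1]}$ (the paper's remark after the lemma, $[\ast,\ldots,\ast,x,y]\equiv[\ast,\ldots,\ast,y,x]^{-1}$, is such a coincidence, and it is the reason $x_1$ is duplicated in the statement). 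Permuted right-normed commutators of fixed multidegree are in general linearly dependent, so "different order $\Rightarrow$ different element" is false as a general principle; ruling out every $\pi\ne\id$ with $\pi(1)=1$ is the actual content of the lemma, and the proposal offers no mechanism for it. The paper does this by further specializations in the unitriangular group: with $x_1:=E_{12}E_{n,n+1}$, $x_i:=E_{i,i+1}$ one argues inductively from the tail that $\pi(n-1)\in\{2,n-1\}$, $\pi(n-2)=n-2$, etc., and the residual possibility $\pi(n-1)=2$ is killed by a second, different assignment ($x_1=\cdots=x_{n-2}:=E_{12}E_{23}\cdots E_{n-1,n}$, $x_{n-1}:=E_{n,n+1}$) under which the right-hand side survives but the permuted commutator visibly vanishes.

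A smaller but real gap: your nonvanishing step as described does not work. Sending $x_1,\ldots,x_{n-1}$ to single elementary transvections $E_{i,i+1}$ makes $[x_1,\ldots,x_{n-1},x_1]$ trivial for $n\ge 4$, because $x_1$ occurs at both ends and the innermost commutator $[E_{n-1,n},E_{12}]$ already vanishes; one needs a product such as $x_1:=E_{12}E_{n,n+1}$ (as in the paper) and then multilinearity to evaluate. So the proposal is on the right track and proves $\pi(1)=1$ by a slightly different (and arguably cleaner) route than the paper's first case, but it is missing the key idea for the main case $\pi(1)=1$, $\pi\ne\id$, and its stated way of certifying $[x_1,\ldots,x_{n-1},x_1]\not\equiv 1$ needs repair.
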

\begin{proof}
By the universal property, it suffices to prove the claim for any elements $x_1,\ldots,x_{n-1}$ of a group $P$ with exponent $p$ and nilpotency class $\le n$. Let $P\le\GL(n+1,p)$ be the group of upper unitriangular matrices. For $x\in P$ we have 
\[x^p-1=(x-1)^p=(x-1)^{n+1}(x-1)^{p-n-1}=0\] 
since $p>n$. Hence, $P$ has exponent $p$. For $i<j$, let $E_{ij}\in P$ be the unitriangular matrix with $1$ on position $(i,j)$ and zero elsewhere off the diagonal. A direct calculation shows that 
\begin{equation}\label{eqEij}
[E_{ij},E_{kl}]=E_{il}^{\delta_{jk}}E_{kj}^{-\delta_{il}},
\end{equation}
where $\delta_{jk}\delta_{il}=0$ since $i<j$ and $k<l$. 
An induction shows that $P^{[k]}$ is generated by the matrices $E_{ij}$ with $|j-i|\ge k$. In particular, $P^{[n]}=\langle E_{1,n+1}\rangle\cong C_p$ and $P^{[n+1]}=1$ (see \cite[Satz~III.16.3]{Huppert}). So $P$ has indeed nilpotency class $n$ . 
We define $x_1:=E_{12}E_{n,n+1}$ and $x_i:=E_{i,i+1}$ for $i=2,\ldots,n-1$. Then the right hand side of \eqref{eqComm1} is
\[[x_1,\ldots,x_{n-1},x_1]=\begin{cases}
[x_1,\ldots,x_{n-2},E_{n-1,n+1}]=\ldots=[x_1,E_{2,n+1}]=E_{1,n+1}&\text{if }n\ge 4,\\
[x_1,E_{24}E_{13}^{-1}]=[E_{12}E_{34},E_{24}][E_{12}E_{34},E_{13}]^{-1}=E_{14}&\text{if }n=3.
\end{cases}
\]
Suppose first that $\pi(1)\ne1$. Then $x_1$ appears only once in $c:=[x_{\pi(1)},\ldots,x_{\pi(n-1)},x_{\pi(1)}]$. By \autoref{lemMultlinear}, $c\equiv c_1c_2\pmod{\overline{F}^{[n+1]}}$, where $c_1$ and $c_2$ are $n$-fold commutators in the elements $E_{i,i+1}$. Both $c_i$ contain $x_{\pi(1)}$ twice, so they muss miss some $E_{r,r+1}$. But now each $c_i$ lives inside a direct product of the form 
\[Q:=\biggl\{\begin{pmatrix}
Q_1&0\\0&Q_2
\end{pmatrix}:Q_1\le\GL(r,p),\ Q_2\le\GL(n+1-r,p)\biggr\}.\] 
Since $Q$ has nilpotency class $<n$, we derive the contradiction $c\equiv c_1c_2\equiv 1\pmod{\overline{F}^{[n+1]}}$. 

Therefore, $\pi(1)=1$. Now $[x_{\pi(n-1)},x_1]\ne 1$ implies $\pi(n-1)\in\{2,n-1\}$ by \eqref{eqEij}. Assume first that $\pi(n-1)=n-1$. Then $[x_{\pi(n-2)},x_{n-1},x_1]=[x_{\pi(n-2)},E_{n-1,n+1}]\ne 1$ implies $\pi(n-2)=n-2$. Inductively, one obtains $\pi=\id$, $c=E_{1,n+1}$ and $a=1$ in this case. Now suppose that $\pi(n-1)=2$ and without loss of generality, $n\ge 4$. 
Here we use a different realization of $\overline{F}_n$ inside $P$.
More precisely, we reassign $x_i:=E_{12}E_{23}\ldots E_{n-1,n}$ for $i=1,\ldots,n-2$ and $x_{n-1}:=E_{n,n+1}$. Then clearly, $c=[\ldots,[x_1,x_1]]=1$. On the other hand, the right hand side of \eqref{eqComm1} becomes
\[[x_1,\ldots,x_{n-1},x_1]=[x_1,\ldots,x_1,E_{n-1,n+1}]^{-1}=\ldots=[x_1,E_{2,n+1}]^{-1}=E_{1,n+1}^{-1}.\]
Contradiction. 
\end{proof}

We have duplicated $x_1$ in the commutator in \autoref{lemLie} to avoid relations of the form
\[[*,\ldots,*,x,y]\equiv[*,\ldots,*,y,x]^{-1}\pmod{\overline{F}^{[n+1]}}.\]

To prove Cornulier's theorem, let $G=\{g_1,\ldots,g_n\}$ be a finite group of order $n$. 
We construct a finite group $H$ with $\Out(H)\cong G$. Since $\Out(1)=1$ and $\Out(C_3)\cong C_2$, we may assume that $n\ge 3$ (as in \autoref{lemLie}).
We identify the generators $x_i$ of $\overline{F}$ with $x_{g_i}$ and define
\[N:=\langle[x_{hg_1},\ldots,x_{hg_{n-1}},x_{hg_1}]:h\in G\rangle \overline{F}^{[n+1]}\le \overline{F}^{[n]}.\]
Since $\overline{F}^{[n]}/\overline{F}^{[n+1]}\le \Z(\overline{F}_n)$, it follows that $N\unlhd \overline{F}$. Let $P:=\overline{F}/N\cong \overline{F}_n/(N/\overline{F}^{[n+1]})$. Notice that $P$ has exponent $p$ and nilpotency class $\le n$. Moreover, $P/P'\cong\overline{F}/\overline{F}'\cong C_p^n$. Again we will identify the $x_i$ with their images in $P$.

Let $\FF_p^\times=\langle\zeta\rangle$. For $1\le i\le n$, the map $x_j\mapsto x_j^{\zeta^{\delta_{ij}}}$ can be extended to an automorphism $q_i$ of $\overline{F}$. By \autoref{lemMultlinear},
\[q_i([x_{j_1},\ldots,x_{j_n}])=[q_i(x_{j_1}),\ldots,q_i(x_{j_n})]\equiv[x_{j_1},\ldots,x_{j_n}]^\gamma\pmod{\overline{F}^{[n+1]}}\]
for some $\gamma\in\ZZ$. In particular, $q_i(N)=N$ and $q_i$ extends to an automorphism of $P$. Moreover, the group $Q:=\langle q_1,\ldots,q_n\rangle\le\Aut(P)$ is isomorphic to $C_{p-1}^n$. Finally, we define $H:=P\rtimes Q$. As usual, we regard $P$ and $Q$ as subgroups of $H$. Then $q_ix_jq_i^{-1}=x_j^{\zeta^{\delta_{ij}}}$ for $1\le i,j\le n$. Note that $H/P'\cong (C_p\rtimes C_{p-1})^n$. 

The following result implies Cornulier's theorem.

\begin{Thm}
With the notation above, $\Out(H)\cong G$. 
\end{Thm}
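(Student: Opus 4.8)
The plan is to produce an injective homomorphism $G\to\Out(H)$ out of the left‑regular action of $G$ on itself, and then to check that its image is all of $\Out(H)$. For the first part, observe that for $h\in G$ the assignment $x_g\mapsto x_{hg}$ permutes the free generators of $\overline{F}$, hence defines $\rho_h\in\Aut(\overline{F})$; since the generating set $\{[x_{h'g_1},\dots,x_{h'g_{n-1}},x_{h'g_1}]:h'\in G\}$ of $N$ modulo $\overline{F}^{[n+1]}$ is permuted by $\rho_h$ (replacing $h'$ by $hh'$), we get $\rho_h(N)=N$, so $\rho_h$ descends to $P$. A short computation gives $\rho_h q_i\rho_h^{-1}=q_j$ whenever $g_j=hg_i$, which lets us extend $\rho_h$ to an automorphism $\hat\rho_h$ of $H=P\rtimes Q$ with $\hat\rho_h(x_g)=x_{hg}$, and $h\mapsto\hat\rho_h$ is a homomorphism. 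For $h\ne1$ the permutation $g\mapsto hg$ is non‑trivial, so $\hat\rho_h$ acts non‑trivially on the abelian quotient $H/P\cong Q$, whereas every inner automorphism of $H$ acts trivially there; hence $\hat\rho_h$ is not inner and $G\hookrightarrow\Out(H)$.

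For the reverse inclusion let $\alpha\in\Aut(H)$. As in \autoref{lemOutHol}, $P$ is the characteristic normal Hall $p$‑subgroup, so $\alpha(P)=P$, and by Schur–Zassenhaus we may compose $\alpha$ with an inner automorphism to arrange $\alpha(Q)=Q$. The $\FF_pQ$‑module $P/P'$ is the direct sum of the lines $\langle\bar x_{g_i}\rangle$, which afford the pairwise distinct characters $\chi_i\colon q_j\mapsto\zeta^{\delta_{ij}}$; since $\alpha$ acts $\beta$‑semilinearly for $\beta:=\alpha|_Q$, it permutes these lines. Thus there is $\pi\in S_n$ with $\alpha(\bar x_{g_i})=\lambda_i\bar x_{g_{\pi(i)}}$ for suitable $\lambda_i\in\FF_p^\times$ (and $\beta$ is then the corresponding permutation of the $q_i$).

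The crux is to show $\pi$ lies in the regular representation of $G$, i.e.\ $g_{\pi(i)}=h_0g_i$ for a fixed $h_0\in G$. Since $P$ has class $\le n$ (because $\overline{F}^{[n+1]}\le N$), we identify $P^{[n]}=P^{[n]}/P^{[n+1]}$ with $V/W$ for $V:=\overline{F}^{[n]}/\overline{F}^{[n+1]}$ and $W:=\langle\bar c_h:h\in G\rangle$, $c_h:=[x_{hg_1},\dots,x_{hg_{n-1}},x_{hg_1}]$. By multilinearity of the $n$‑fold commutator modulo $P^{[n+1]}=1$ (\autoref{lemMultlinear}), the $P'$‑correction terms in $\alpha(x_{g_i})$ are irrelevant on $P^{[n]}$, so $\alpha$ acts there by the functorial action $\Lambda$ of $\bar x_{g_i}\mapsto\lambda_i\bar x_{g_{\pi(i)}}$ on $V$, passed to $V/W$; in particular $\Lambda(W)=W$. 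Now $\Lambda(\bar c_h)$ is a non‑zero scalar times a commutator $\bar d_h$ of the same shape as $\bar c_h$ but omitting the single generator $x_{g_m}$ with $g_m=\hat\pi(hg_n)$, where $\hat\pi(g_i):=g_{\pi(i)}$. Applying the endomorphism of $\overline{F}$ that kills $x_{g_m}$ annihilates every $\bar c_{h'}$ except $\bar c_{h_0(h)}$ with $h_0(h):=g_mg_n^{-1}$ (again by multilinearity), so $\Lambda(\bar c_h)\in W$ collapses to $\bar d_h=(\text{scalar})\cdot\bar c_{h_0(h)}$, a relation between two commutators of the form treated in \autoref{lemLie} in the $n-1$ generators $\{x_{g_k}:k\ne m\}$. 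That lemma forces them to be literally equal, hence $\hat\pi(hg_k)=h_0(h)g_k$ for $k=1,\dots,n-1$; since $\hat\pi$ is a bijection this persists for $k=n$, and letting $h$ vary shows $h_0(h)h^{-1}$ is independent of $h$. Writing $h_0$ for the common value, $\hat\pi$ is left translation by $h_0$.

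Composing $\alpha$ with $\hat\rho_{h_0}^{-1}$ we may assume $\pi=\id$; then the character relation forces $\alpha|_Q=\id$, and $\alpha(x_{g_i})=x_{g_i}^{\lambda_i}u_i$ with $u_i\in P'$. Composing further with conjugation by a suitable element of $Q$ (harmless since $Q$ is abelian) removes the scalars, leaving $\alpha|_Q=\id$ and $\alpha(x_{g_i})=x_{g_i}u_i$ with $u_i\in P'$. Then $\alpha$ commutes with conjugation by $Q$, so for $j\ne i$ the equality $\alpha(x_{g_i})=\alpha(q_jx_{g_i}q_j^{-1})=q_j\alpha(x_{g_i})q_j^{-1}$ gives $q_ju_iq_j^{-1}=u_i$, i.e.\ $u_i\in\C_{P'}(\langle q_j:j\ne i\rangle)$. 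But in every graded factor $P^{[k]}/P^{[k+1]}$ with $k\ge2$ the occurring characters are products $\chi_{l_1}\cdots\chi_{l_k}$, and such a product is trivial on $\langle q_j:j\ne i\rangle$ only when all entries equal $x_{g_i}$ — in which case the commutator is trivial. Hence $\C_{P'}(\langle q_j:j\ne i\rangle)=1$, so $u_i=1$, $\alpha=\id$, and the original $\alpha$ agrees with $\hat\rho_{h_0}$ modulo $\Inn(H)$; together with the first paragraph, $\Out(H)\cong G$. I expect the third step to be the main obstacle: the relations defining $N$ are invisible in $H/P'$, and the point is to make them bite by recognizing the $\alpha$‑action on $P^{[n]}$ as a functorial one and then extracting from $\Lambda(W)=W$ — via \autoref{lemLie} and the generator‑killing device — that $\pi$ must come from the regular action of $G$. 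The remaining steps closely parallel the proof of \autoref{lemOutHol}.
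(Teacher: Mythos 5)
Your proposal is correct and follows essentially the same route as the paper: the regular action gives the embedding $G\hookrightarrow\Out(H)$, the ``kill one generator'' specialization combined with \autoref{lemLie} identifies the induced permutation of the coordinates as a left translation, and the weight argument with the $q_j$ via \autoref{lemMultlinear} (multiplicities $<p-1$) forces the normalized automorphism to be trivial. The only cosmetic deviation is that you bypass \autoref{lemOutHol} by directly analyzing the $Q$-eigenlines of $P/P'$ after applying Schur--Zassenhaus, carrying scalars $\lambda_i$ that you later remove by conjugation with an element of $Q$ -- a harmless variant of the paper's normalization modulo $P'$.
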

\begin{proof}
For $h\in G$, the map $x_i\mapsto x_{hg_i}$ ($i=1,\ldots,n$) can be extended to an automorphism $\alpha_h$ of $\overline{F}$. By the definition of $N$, we have $\alpha_h(N)=N$. Therefore, we consider $\alpha_h$ as an automorphism of $P$. 
There is a similar automorphism $\beta_h\in\Aut(Q)$ with $\beta_h(q_i)=q_{hg_i}$, where $q_i$ is identified with $q_{g_i}$. 
Since 
\[\alpha_h(q_ix_jq_i^{-1})=\alpha_h(x_j)^{\zeta^{\delta{ij}}}=x_{hg_j}^{\zeta^{\delta{ij}}}=q_{hg_j}x_{hg_i}q_{hg_j}^{-1}=\beta_h(q_j)\alpha_h(x_i)\beta_h(q_j)^{-1},\]
the actions are compatible. This gives rise to a regular action $\alpha:G\to\Aut(H)$. 
Since $g\ne 1$ acts non-trivially on $H/P\cong Q$, $\alpha(G)\cap\Inn(H)=1$. Thus, it suffices to show that $\Aut(H)=\alpha(G)\Inn(H)$.

To this end, let $\gamma\in\Aut(H)$ be arbitrary. Then $\gamma$ normalizes the normal Sylow $p$-subgroup $P$ and $P'$. 
By \autoref{lemOutHol}, we may assume that $\gamma$ permutes the factors of $H/P'$. So there exists a permutation $\pi\in S_n$ such that $\gamma(q_i)=q_{\pi(i)}\pmod{P'}$ and $\gamma(x_i)\equiv x_{\pi(i)}\pmod{P'}$ for $i=1,\ldots,n$. This implies 
\[[x_{\pi(1)},\ldots,x_{\pi(n-1)},x_{\pi(1)}]=\gamma([x_1,\ldots,x_{n-1},x_1])=\gamma(1)=1\]
by \autoref{lemMultlinear}. This yields an equation inside $N/\overline{F}^{[n+1]}$:
\[[x_{\pi(1)},\ldots,x_{\pi(n-1)},x_{\pi(1)}]\equiv \prod_{h\in G}[x_{hg_1},\ldots,x_{hg_{n-1}},x_{hg_1}]^{a_h}\pmod{\overline{F}^{[n+1]}}\]
for some $0\le a_h\le p-1$. By the universal property, this equation remains true when we set $x_{\pi(n)}=1$. For the unique $h\in G$ with $x_{hg_n}=x_{\pi(n)}$ we deduce 
\[[x_{\pi(1)},\ldots,x_{\pi(n-1)},x_{\pi(1)}]\equiv[x_{hg_1},\ldots,x_{hg_{n-1}},x_{hg_1}]^{a_h}\pmod{\overline{F}^{[n+1]}}.\]
By \autoref{lemLie}, $x_{\pi(i)}=x_{hg_i}$ for $i=1,\ldots,n-1$. Therefore, after composing $\gamma$ with $\alpha(h)^{-1}$, we may assume that $\pi=1$. Since $|P'|$ is coprime to $|Q|$ and $\gamma(Q)\le P'Q$, there exists a $y\in P'$ with $\gamma(Q)=yQy^{-1}$ by the Schur--Zassenhaus theorem. Since conjugation with $y$ does not affect $H/P'$, we may assume that $\gamma(Q)=Q$.
In particular, $\gamma$ centralizes $Q$. 

Each quotient $P^{[k]}/P^{[k+1]}$ has a basis (as an elementary abelian group) consisting of some $k$-fold commutators in the $x_i$. By concatenation we obtain a basis $c_1,\ldots,c_s$ of $\bigtimes_{k=1}^n P^{[k]}/P^{[k+1]}$. 
For $c_i\in P^{[k]}\setminus P^{[k+1]}$, we have $q_jc_iq_j^{-1}\equiv c_i^{\zeta^l}\pmod{P^{[k+1]}}$ by \autoref{lemMultlinear}, where $l$ is the multiplicity of $x_j$ as a component of $c_i$. Clearly, $l\le k-1\le n-1<p-1$. Hence, $\zeta^l\equiv 1\pmod{p}$ can only hold if $l=0$. This shows that $q_j$ centralizes $c_i$ if and only if $x_j$ does not appear in $c_i$. Since the $c_i$ form a basis, it follows that $\C_P(q_j)=\langle x_i:i\ne j\rangle$.
On the other hand, $q_j\gamma(x_i)q_j^{-1}=\gamma(q_jx_iq_j^{-1})=\gamma(x_i)$ for $j\ne i$ shows that 
\[\gamma(x_i)\in \bigcap_{j\ne i}\C_P(q_j)=\langle x_i\rangle.\] 
Since we already know that $\gamma(x_i)\equiv x_i\pmod{P'}$, we conclude $\gamma(x_i)=x_i$ for $i=1,\ldots,n$ and $\gamma=\id$, as desired.
\end{proof}

In order to estimate $|H|$ in terms of $n$, we consider the free nilpotent Lie algebra $L$ over $\QQ$ of rank $n$ and class $n$.
By Witt's formula, 
\[\dim L=\sum_{k=1}^n\sum_{d\mid k}\mu(d)n^{k/d},\]
where $\mu$ is the Möbius function. 
This number grows roughly as $n^{n-1}$ (see \cite[Lemma~20.7]{BNV}).
Notice that $\FF_p\otimes L$ is the corresponding free nilpotent Lie algebra over $\FF_p$. Since $p>n$, the Lazard correspondence turns $\FF_p\otimes L$ into $\overline{F}_n$ (see \cite[Example~10.24]{Khukhro}). In particular, $|\overline{F}_n|=p^{\dim L}$. Moreover, an application of \autoref{lemLie} reveals that $|N/\overline{F}^{[n+1]}|=p^n\sim (p-1)^n=|Q|$. Altogether, the order of magnitude of $|H|$ is $p^{n^{n-1}}$ (the estimate $n^n$ in \cite{Cornulier} is unjustified). 

As a concrete example, consider $G\cong C_3$. Here we can take $p=5$. Then $|\overline{F}_3|=5^{14}$, $|P|=5^{11}$ and $|H|=2^65^{11}$. Cornulier's construction yields a group of order $2^65^{29}$, as he remarked at the end of \cite{Cornulier}. 

One may ask if the group $H$ can also be used to prove Pettet's theorem. This does not seem to be easy, since it is not clear whether every automorphism of $H\rtimes G$ normalizes $H$. Conversely, the group $N$ in \autoref{thmext} has outer automorphisms, which do not come from $G$.

\end{document}